\newcommand{\R}{\mathbf{R}}
\newcommand{\C}{\mathbf{C}}
\newcommand{\N}{\mathbf{N}}
\renewcommand{\P}{\mathbf{P}}
\newcommand{\dist}{\operatorname{dist}}
\newcommand{\length}{\operatorname{len}}
\newcommand{\Lip}{\operatorname{Lip}}
\newcommand{\hull}{\operatorname{h}}
\renewcommand{\Re}{\operatorname{Re}}
\newcommand{\scrO}{\mathscr{O}}
\newcommand{\scrL}{\mathscr{L}}
\newcommand{\scrB}{\mathscr{B}}
\newcommand{\angles}[1]{\left< #1 \right>}
\newcommand{\pars}[1]{\left( #1 \right)}
\newcommand{\abs}[1]{\left| #1 \right|}
\newtheorem{theorem}{Theorem}[section]
\newtheorem{lemma}[theorem]{Lemma}
\newtheorem{cor}[theorem]{Corollary}
\theoremstyle{definition}
\theoremstyle{remark}
\numberwithin{equation}{section}
\begin{document}

\title{computation of some transcendental integrals from path signatures}


\author{Andrew Ursitti}
\address{Department of Mathematics, Purdue University}
\curraddr{}
\email{aursitti@math.purdue.edu}
\thanks{}

\subjclass[2010]{Primary }

\date{\today}


\begin{abstract}
It is shown that if $\gamma$ is a path of finite $p$ variation ($1\leq p< 2$) 
in a euclidean vector space and $f,g,h$ are Lipschitz functions on the trace 
of $\gamma$ then $s\mapsto F(s)=\int_\gamma f^sg dh$ defines an 
entire holomorphic function provided the convex hull of the image of $f$ does not contain 
zero. If in addition $|\log z|\leq \log 2$ on the 
convex hull of the image of $f$ then for any $s\in \C$, $F(s)$ can be computed 
from the nonnegative integer values $\{F(k)\}_{k\in \N}$. If in addition to these 
hypotheses each of $f,g,h$ is a polynomial, then the values $F(k)$ are computable 
directly from the signature of $\gamma$ thus all values of $F(s)$ are
computable from the signature. As a special case the 
winding number of a closed path $\gamma$ around an affine submanifold of codimension 
two is computed from finitely many terms of the signature provided certain estimates 
are satisfied. 
\end{abstract}

\maketitle


\section{Introduction}
In this note we will show how certain transcendental integrals of the form 
$\int_\gamma f^sg dh$ can be algorithmically recovered from the signature of the path
$\gamma$. As a 
special case we will give an alternate proof of a result due
originally to P. Yam \cite{YAM} concerning the recovery of the 
winding number of a path around a codimension two affine submanifold
from the signature of the path provided certain estimates are satisfied. 
The signature of $\gamma$ is the 
infinite tensor (i.e. formal series) 
$X_\gamma\in \widehat{\bigoplus}_{k\geq 0}V^{\otimes k}$ defined in
degree zero to be $1$ and in degree $k>0$ by the iterated integral 
of tensors 
\[
\int_{0<t_1<\cdots<t_k<T}d\gamma_{t_1}\otimes \cdots \otimes d\gamma_{t_k}.
\] 
The signature is a homomorphism from the collection of all paths 
beginning at zero of finite $p$ variation ($1\leq p< 2$), viewed 
as a group under concatenation, into the group of infinite tensors 
with $1$ in degree zero. An orientation preserving 
change of parameter does not change the signature, and an orientation 
reversing change of parameter inverts the signature, 
as was proved by K.T. Chen \cite{MR0073174} for piecewise $\mathscr{C}^1$ 
paths (the corresponding results are easily proved for paths of finite $p$ 
variation with $1\leq p <2$
using the 
Young-L\'{o}eve integration theory \cites{MR1555421,MR2604669}). Choosing 
a specific path and contatenating it with its inverse therefore produces 
a path with trivial signature, and Chen later proved \cite{MR0106258} that
concatenations of
such paths are essentially the only paths with trivial signature. 
Specifically, Chen defined a path to be \emph{irreducible} 
if it doesn't contain any segments which consist of a path and its inverse 
concatenated in succession, and then proved that 
two irreducible paths have the same signature if and only if they 
differ by a translation and an orientation 
preserving change of parameter. This was later generalized to 
paths of bounded variation by Hambly and Lyons 
\cite{MR2630037}, who defined the notion of \emph{tree-like} paths 
and proved that two paths of bounded 
variation have the same signature if and only if the concatenation 
of one with the inverse of the other 
is a lipschitz tree-like path. Boedihardjo, Ni and Qian 
\cite{MR3237773} proved that two simple paths 
of finite $p$ variation ($1\leq p< 2$) in the plane have the same signature 
if and only if they differ by translation and orientation 
preserving change of parameter. These uniqueness results show that 
one should expect topological data such as the winding number to 
be contained in the signature. 

It should be mentioned that in \cite{MR3237773} the authors also 
proved that for a closed path in the plane with variation 
less than two, the moments of the winding number when viewed 
as a function on the plane can be recovered by 
evaluating the signature on Lyndon words 
(up to some simple constant multiples). Thus, \cite{MR3237773}
essentially contains a proof that the winding number about any 
specific point can be recovered from 
the signature by first evaluating the signature on Lyndon 
words to find the moments, then computing the winding number 
at a specific point by either computing its convolution with 
a gaussian approximate identity from 
the moments and taking 
a limit, or computing 
the fourier transform from the moments, then inverting the 
fourier transform to find the winding number at 
a specific point. 

Here the winding number will be recovered from the signature 
by a different method which we
now describe. The standing hypotheses are these:
\begin{enumerate}
\item $V$ is a finite dimensional euclidean vector space over $\R$.
\item $\gamma:[0,T]\rightarrow V$ is a continuous path with finite $p$ variation, $1\leq p<2$.
\item $f:\gamma([0,T])\rightarrow \C$ is a lipschitz function from $\gamma([0,T])\subset V$ (as a metric space with metric inherited from $V$) such that the convex hull of the image $f(\gamma([0,T]))$ does 
not contain zero.
\item $g,h:\gamma([0,T])\rightarrow \C$ are arbitrary Lipschitz functions.
\item $E\in \scrO(\C)$ is an arbitrary entire function. 
\item $x_1,x_2\in V^\ast$ are independent, $\xi_1,\xi_2\in \R$ and
\[
1/2<(x_1\circ\gamma-\xi_1)^2+(x_2\circ\gamma-\xi_2)^2<2
\]
on $[0,T]$.
\end{enumerate} 
To condense notation denote by $S_\gamma$ the image 
$S_\gamma=\gamma([0,T])\subset V$ and let $\log(\cdot)$ 
be a continuous branch of the logarithm on $f(S_\gamma)$. 
Under these hypotheses we shall prove in section 2 that $(E,g,h)\mapsto I^f_\gamma(E,g,h)=\int_\gamma (E\circ \log \circ f)gdh$
defines a $\C$-trilinear map 
$\scrO(\C)\times \Lip(S_\gamma)\times \Lip(S_\gamma)\to\C$ and give 
an explicit bound on its absolute value (Corollary \ref{asdfjkl;6}). 
Choosing $E=E_{k,s}\in \scrO(\C)$ given by 
$E_{k,s}(z)=z^ke^{sz}$ defines the integral 
$I^f_\gamma(E_{k,s},g,h)=\int_\gamma (\log f)^k f^sgdh$ 
and naturally one expects that 
varying the parameter $s\in \C$ should produce an entire 
function with derivative $I^f_\gamma(E_{k+1,s},g,h)$,
this is indeed the case and is also proved in section 
2 (Theorem \ref{asdfjkl;11}). 

The entire function $s\mapsto F(s)=I^f_\gamma(E_{0,s},g,h)=\int_\gamma f^sgdh$ 
interpolates the values $\{F(k)\}_{k\in \N}$.\footnote{Here $\N=\{0,1,2,3,\ldots \}$, i.e. $0\in\N$.} In section 3 we 
use a general procedure developed by 
Boas and Buck \cites{MR0162914,MR0022601,MR0029985} 
to recover any value $F(s)$ from 
the nonnegative integer values $\{F(k)\}_{k\in\N}$, 
provided certain estimates are satisfied. Specifically, 
we shall prove:
\begin{theorem}\label{asdfjkl;1}
If 
$|\log z|<\log 2$ for all $z\in \hull(f(S_\gamma))$, then
the series
\[
\sum_{0\leq n}(-1)^n\binom{s}{n}\sum_{0\leq k\leq n}(-1)^k\binom{n}{k}\int_\gamma f^kg dh
\]
converges to $\int_\gamma f^s g dh$. 
\end{theorem}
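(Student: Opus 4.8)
The plan is to recognize the displayed series as the Newton forward-difference (Gregory--Newton) interpolation series of the entire function $F(s)=\int_\gamma f^s g\,dh$, and then to apply the classical convergence criterion for such series, which requires only that $F$ be of exponential type strictly below $\log 2$.

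First I would check the purely combinatorial identity relating the two forms. Writing $\Delta$ for the forward difference operator $\Delta F(s)=F(s+1)-F(s)$, one has $\Delta^nF(0)=\sum_{0\le k\le n}(-1)^{n-k}\binom{n}{k}F(k)$, so that
\[
(-1)^n\binom{s}{n}\sum_{0\le k\le n}(-1)^k\binom{n}{k}F(k)=\binom{s}{n}\Delta^nF(0).
\]
Since $F(k)=\int_\gamma f^kg\,dh$ by definition, the displayed series is exactly $\sum_{n\ge 0}\binom{s}{n}\Delta^nF(0)$, the Newton series of $F$ based at $0$. Thus the theorem asserts precisely that this Newton series converges to $F(s)$ for every $s\in\C$.

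Second, I would establish that $F$ has exponential type strictly below $\log 2$. By Theorem \ref{asdfjkl;11}, $F\in\scrO(\C)$, and $F(s)=I^f_\gamma(E_{0,s},g,h)$ for $E_{0,s}(z)=e^{sz}$. Because $\hull(f(S_\gamma))$ is compact and $\abs{\log z}<\log 2$ on it, the continuous function $z\mapsto\abs{\log z}$ attains a maximum $\tau<\log 2$ there, and in particular $\abs{\log f(\gamma_t)}\le\tau$ for every $t$. Inserting $E=E_{0,s}$ into the explicit bound of Corollary \ref{asdfjkl;6} and using
\[
\sup_{t\in[0,T]}\abs{f(\gamma_t)^s}=\sup_{t\in[0,T]}e^{\Re(s\log f(\gamma_t))}\le e^{\tau\abs{s}},
\]
I obtain an estimate $\abs{F(s)}\le e^{\tau\abs{s}}P(s)$ valid for all $s\in\C$, with $P$ growing at most polynomially. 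In particular $F$ is an entire function of exponential type at most $\tau<\log 2$.

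Finally, I would invoke the Boas--Buck theory \cites{MR0162914,MR0022601,MR0029985}: an entire function of exponential type strictly less than $\log 2$ is represented in the whole plane by its convergent Newton series. Applied to $F$, this gives $F(s)=\sum_{n\ge 0}\binom{s}{n}\Delta^nF(0)$, which by the first step is the stated series. The main obstacle is the second step: one must extract from Corollary \ref{asdfjkl;6} a bound uniform enough in $s$ to pin the exponential type strictly below the critical value $\log 2$ --- the function $2^s$ has type exactly $\log 2$ and lies on the boundary of convergence of the Newton series, so any essential loss in the constant $\tau$ would break the appeal to the interpolation theorem.
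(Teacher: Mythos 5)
Your proposal is correct and follows essentially the same route as the paper: the paper likewise deduces the growth bound $|F(s)|\le Ae^{B|s|}$ with $B<\log 2$ from Corollary \ref{asdfjkl;6} and then applies the Boas--Buck interpolation theory, the only difference being that instead of quoting the Newton-series theorem as a black box it derives it in-line (Borel transform, P\'olya representation, binomial expansion of $e^{zw}$, and Lemma \ref{asdfjkl;20} identifying $r<\log 2$ as the threshold for $\{|w|=r\}\subset\{|e^w-1|<1\}$). Your closing worry about losing the constant is unfounded: compactness gives $|F(s)|\le C(1+|s|)e^{\tau|s|}$ with $\tau=\max_{\hull(f(S_\gamma))}|\log z|<\log 2$, so the type is pinned at $\tau$ with no essential loss, exactly as in the paper.
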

Here $\hull(\cdot)$ denotes the convex hull. It should be noted that this 
is an iterated sum, and the order of summation should not be changed (however, 
the sum in $n$ is absolutely convergent once the inner sums in the paramter $k$ 
are computed). The inequality $|\log z|<\log 2$ is satisfied by at most one branch 
of the logarithm on $\hull(f(S_\gamma))$, since $\log 2$ is less than $2\pi$, and this is a 
crucial observation since superficially Theorem \ref{asdfjkl;1} implies that
$F(s)$, which depends on the chosen branch of the logarithm, can be computed 
from the integer-exponent values $\{F(k)\}_{k\in \N}$, which do not depend on this 
choice. 

If $f,g$ and $h$
are polynomials then $F(k)=\int_\gamma f^kgdh$ can be 
extracted directly from the signature of $\gamma$. 
Specifically, if $\gamma(0)=0$ and 
if $x_1,\ldots, x_d$ is a basis for $V^\ast$ 
and $\alpha,\beta\in \N^d$ are multi-indices, 
then 
\begin{equation}\label{asdfjkl;200}
\int_\gamma x^\alpha d(x^\beta)=\sum_{i\leq d}\beta_i\sum_{\sigma\in\mathfrak{S}_{|\alpha|+|\beta|-1}}\angles{\sigma[(x_1^{\otimes (\alpha_1+\beta_1)}\otimes\cdots \otimes x_d^{\otimes (\alpha_d+\beta_d)})/x_i]\otimes x_i,X_\gamma}.
\end{equation}

In this expression, 
$(x_1^{\otimes (\alpha_1+\beta_1)}\otimes\cdots \otimes x_d^{\otimes (\alpha_d+\beta_d)})/x_i$ 
indicates the tensor 
$x_1^{\otimes (\alpha_1+\beta_1)}\otimes\cdots \otimes x_d^{\otimes (\alpha_d+\beta_d)}$ with exactly \emph{one} factor $x_i$ removed. 
If $\beta_i>0$ (which is the only case that matters) 
at least one factor of $x_i$ appears inside of a 
consecutive list of such factors in $x_1^{\otimes (\alpha_1+\beta_1)}\otimes\cdots \otimes x_d^{\otimes (\alpha_d+\beta_d)}$, so this ``division" 
operation is well defined. The entire argument 
$\sigma[(x_1^{\otimes (\alpha_1+\beta_1)}\otimes\cdots \otimes x_d^{\otimes (\alpha_d+\beta_d)})/x_i]\otimes x_i$
can be interpreted as follows: from 
$x_1^{\otimes (\alpha_1+\beta_1)}\cdots \otimes x_d^{\otimes (\alpha_d+\beta_d)}$, remove one factor $x_i$, let the permutation $\sigma$ permute the 
remaining $|\alpha|+|\beta|-1$ factors, then 
replace the factor $x_i$ on the right. Integrals of the form $\int_\gamma PdQ$ where $P$ and $Q$ are polynomials can 
then be computed by splitting $P$ and $Q$ into monomials and using (\ref{asdfjkl;200}). With minor adjustments one can do 
away with the requirement $\gamma(0)=0$.

In particular, if $f,g$ and $h$ are polynomials then the values $\{F(k)\}_{k\in\N}$ can be extracted 
directly from $X_\gamma$ so evidently Theorem \ref{asdfjkl;1} shows that if $|\log z|<\log 2$ on the 
convex hull of the trace of $f\circ \gamma$, then every value of the entire function $F(s)=\int_\gamma f^sgdh$
can be recovered from the signature of $\gamma$. In particular, if in addition we assume that 
$\gamma$ is a closed path then we can use this method to recover the 
winding number of $\gamma$ around the codimension two affine submanifold $\{x_1=\xi_1,x_2=\xi_2\}$ provided
the standing hypothesis $1/2<(x_1\circ\gamma-\xi_1)^2+(x_2\circ\gamma-\xi_2)^2<2$ is satisfied. The $x_1\wedge x_2$-oriented winding number of 
$\gamma$ around 
$\{x_1=\xi_1,x_2=\xi_2\}$ is given by 
\[
W_\gamma(x_1\wedge x_2;\xi_1,\xi_2)=\frac{1}{2\pi}\int_\gamma\frac{(x_1-\xi_1) dx_2-(x_2-\xi_2) dx_1}{(x_1-\xi_1)^2+(x_2-\xi_2)^2}
\]
Thus, further specifying the parameters to 
$f=(x_1-\xi_1)^2+(x_2-\xi_2)^2$, $g=x_1-\xi_1$, $h=x_2-\xi_2$, and then switching 
$g$ and $h$ for the second summand, we find that 
\[
F(s)=\int_\gamma [(x_1-\xi_1)^2+(x_2-\xi_2)^2]^s ((x_1-\xi_1) dx_2-(x_2-\xi_2) dx_1)
\]
can be recovered from the signature 
by Theorem \ref{asdfjkl;1}, provided that 
$1/2<(x_1\circ\gamma-\xi_1)^2+(x_2\circ\gamma-\xi_2)^2<2$. In particular the winding number
$\frac{1}{2\pi}F(-1)= W_\gamma(x_1\wedge x_2;\xi_1,\xi_2)$ 
can be found in this manner. Specifically, we prove:
\begin{theorem}\label{asdfjkl;101}
If in addition to the standing hypotheses $\gamma$ is a closed path,
then 
\begin{align*}
W_\gamma&(x_1\wedge x_2;\xi_1,\xi_2) \\
&=\frac{1}{2\pi}\sum_{0\leq n\leq N}\sum_{0\leq k\leq n}(-1)^{k}\binom{n}{k}\sum_{k_1+k_2=k}\frac{k!}{k_1!k_2!} \sum_{\begin{subarray}{c} j_1^1+j_1^2=2k_1 \\ j_2^1+j_2^2=2k_2\end{subarray}} 
\frac{(2k_1)!}{j_1^1!j_1^2!}\frac{(2k_2)!}{j_2^1!j_2^2!} \angles{T^{j_1^1,j_1^2}_{j_2^1,j_2^2},X_\gamma}
\end{align*}
where $T^{j_1^1,j_1^2}_{j_2^1,j_2^2}\in V^{\ast\otimes(j_1^1+j_2^2+1)}\bigoplus V^{\ast\otimes(j_1^1+j_2^2+2)}$ is defined in (\ref{asdfjkl;300}).
\end{theorem}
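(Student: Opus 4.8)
The plan is to specialize Theorem~\ref{asdfjkl;1} to the value $s=-1$ and then convert the resulting integer-exponent integrals into pairings with the signature $X_\gamma$ by means of (\ref{asdfjkl;200}). First I would verify that the specialization $f=(x_1-\xi_1)^2+(x_2-\xi_2)^2$ and $\omega=(x_1-\xi_1)\,dx_2-(x_2-\xi_2)\,dx_1$ meets the hypotheses of Theorem~\ref{asdfjkl;1}: the standing hypothesis $1/2<(x_1\circ\gamma-\xi_1)^2+(x_2\circ\gamma-\xi_2)^2<2$ forces $f(S_\gamma)\subset(1/2,2)\subset\R_{>0}$, whose convex hull is again contained in $(1/2,2)$, so zero lies outside $\hull(f(S_\gamma))$ and the principal branch of the logarithm satisfies $\abs{\log z}<\log 2$ there; moreover $f$ and the coefficients of $\omega$ are polynomials, hence Lipschitz. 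Writing $F(s)=\int_\gamma f^s\omega$ as the difference of the two trilinear integrals obtained from the substitutions $(g,h)=(x_1-\xi_1,x_2-\xi_2)$ and $(g,h)=(x_2-\xi_2,x_1-\xi_1)$, Theorem~\ref{asdfjkl;1} applies to each.

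Next I would evaluate the series of Theorem~\ref{asdfjkl;1} at $s=-1$. Since $\binom{-1}{n}=(-1)^n$, the outer prefactor collapses, $(-1)^n\binom{-1}{n}=1$, and therefore
\[
2\pi W_\gamma(x_1\wedge x_2;\xi_1,\xi_2)=F(-1)=\sum_{0\leq n}\sum_{0\leq k\leq n}(-1)^k\binom{n}{k}\int_\gamma f^k\omega .
\]
This already produces the outer double sum $\sum_n\sum_k(-1)^k\binom{n}{k}$ of the claimed formula; the displayed truncation at $N$ is to be read as the limit of the partial sums, whose convergence is exactly the content of Theorem~\ref{asdfjkl;1}. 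It then remains to expand each integer-exponent integral $\int_\gamma f^k\omega$ in terms of $X_\gamma$.

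For that I would expand $f^k$ by the multinomial theorem, $f^k=\sum_{k_1+k_2=k}\frac{k!}{k_1!k_2!}(x_1-\xi_1)^{2k_1}(x_2-\xi_2)^{2k_2}$, which produces the sum $\sum_{k_1+k_2=k}\frac{k!}{k_1!k_2!}$, and then expand each shifted power by the binomial theorem, letting $(x_1-\xi_1)^{2k_1}$ contribute $x_1^{j_1^1}(-\xi_1)^{j_1^2}$ with coefficient $\frac{(2k_1)!}{j_1^1!j_1^2!}$ and $(x_2-\xi_2)^{2k_2}$ contribute $x_2^{j_2^2}(-\xi_2)^{j_2^1}$ with coefficient $\frac{(2k_2)!}{j_2^1!j_2^2!}$ (so that the surviving coordinate powers are $j_1^1$ and $j_2^2$, as the tensor degrees in the statement already indicate). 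After these expansions every summand is a scalar multiple, a monomial in $\xi_1,\xi_2$, of an iterated integral $\int_\gamma x^\alpha\,d(x^\beta)$ in the honest coordinate functionals, to which (\ref{asdfjkl;200}) applies after the minor $\gamma(0)\neq 0$ adjustment noted in the text; since $X_\gamma$ is translation invariant, this adjustment only redistributes the constant shifts. Collecting, for each fixed $(j_1^1,j_1^2,j_2^1,j_2^2)$, all of the resulting signature terms coming from both summands of $\omega$ into one tensor yields the definition (\ref{asdfjkl;300}) of $T^{j_1^1,j_1^2}_{j_2^1,j_2^2}$ and the pairing $\angles{T^{j_1^1,j_1^2}_{j_2^1,j_2^2},X_\gamma}$ appearing in the statement.

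The structural feature that accounts for the direct sum $V^{\ast\otimes(j_1^1+j_2^2+1)}\oplus V^{\ast\otimes(j_1^1+j_2^2+2)}$ is that the monomial coming from $f^k$ has coordinate degree $j_1^1+j_2^2$, while the linear factor $(x_1-\xi_1)$ or $(x_2-\xi_2)$ supplied by $\omega$ splits into a coordinate part, which raises the tensor degree, and a constant part $-\xi_i$, which contributes only a scalar; together with the single differential this produces the two consecutive degrees $j_1^1+j_2^2+1$ and $j_1^1+j_2^2+2$. The main obstacle I expect is bookkeeping rather than analysis: one must keep straight which factors become coordinate powers, and so enter the tensor, as opposed to scalar powers of $\xi_1,\xi_2$, carry out the $\gamma(0)\neq 0$ correction uniformly across all monomials, and respect the order of summation flagged after Theorem~\ref{asdfjkl;1}, since only the sum in $n$ is absolutely convergent once the inner sums in $k$ have been formed. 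Once the term-by-term application of (\ref{asdfjkl;200}) inside each partial sum is justified and the tensors $T^{j_1^1,j_1^2}_{j_2^1,j_2^2}$ are assembled, the identity follows by matching this expansion with the convergent series of Theorem~\ref{asdfjkl;1} at $s=-1$.
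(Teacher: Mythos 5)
Your proposal is correct and follows essentially the same route as the paper: specialize the interpolation formula of Theorem \ref{asdfjkl;1} at $s=-1$ (using $(-1)^n\binom{-1}{n}=1$), expand $f^k$ multinomially, and convert each monomial integral into signature pairings via (\ref{asdfjkl;200}), assembling the tensors (\ref{asdfjkl;300}). The only bookkeeping refinement is that the inner binomial expansions should be centered at $\gamma(0)$ rather than at $0$ --- i.e.\ in powers of $x_i-x_i\circ\gamma(0)$ and of the constants $x_i\circ\gamma(0)-\xi_i$, not of $x_i$ and $-\xi_i$ --- which is precisely how the prefactors $(x_1\circ\gamma(0)-\xi_1)^{j_1^2}(x_2\circ\gamma(0)-\xi_2)^{j_2^1}$ in (\ref{asdfjkl;300}) arise and is exactly the ``$\gamma(0)\neq 0$ adjustment'' you flagged.
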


In addition to this it is shown that 
$W_\gamma(x_1\wedge x_2;\xi_1,\xi_2)$ can be computed from only 
finitely many terms in the signature, and an estimate on how many
terms are necessary is given. All of this is detailed in section 4.

\section{Regularity of $I^{f}_\gamma$}
To condense notation, define
\begin{align*}
M(E,f,g)&= 
\|g\|_{\Lip(S_\gamma)}\max_{f(S_\gamma)}|E\circ \log |\\
&\hspace{1cm}
+\frac{\|f\|_{\Lip(S_\gamma)}\max_{S_\gamma}|g|\max_{\hull(f(S_\gamma))}|E'\circ \log|}{\dist(0,\hull(f(S_\gamma)))}
\end{align*}
Regarding $\dist(0,\hull(f(S_\gamma)))$, we note that $f(S_\gamma)$
is compact so $\hull(f(S_\gamma))$ is closed and since zero is not in $\hull(f(S_\gamma))$ by
hypothesis, $\dist(0,\hull(f(S_\gamma)))$ is positive.
\begin{lemma}\label{asdfjkl;7}
The map $(E\circ \log \circ f)g:S_\gamma\rightarrow \C$ is Lipschitz on $S_\gamma$ and satisfies $\|(E\circ \log \circ f)g\|_{\Lip(S_\gamma)}\leq M(E,f,g)$.
\end{lemma}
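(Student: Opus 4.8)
The plan is to factor the function as a product and reduce the estimate to two separately controlled ingredients. Write $K=\hull(f(S_\gamma))$ and set $\phi=E\circ\log\circ f$, so that the function in question is $\Phi=\phi\cdot g$. The bound $M(E,f,g)$ has a manifestly Leibniz (product-rule) shape, so once I control $\phi$ in both sup norm and Lipschitz seminorm the estimate should follow by telescoping $\Phi(x)-\Phi(y)=\phi(x)\pars{g(x)-g(y)}+g(y)\pars{\phi(x)-\phi(y)}$ and estimating each group separately.

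The main step is to show that $E\circ\log$ is Lipschitz on the compact convex set $K$ with Lipschitz constant at most $\max_{K}\abs{E'\circ\log}/\dist(0,K)$. Here hypothesis (3) enters decisively. Since $K$ is compact and convex and $0\notin K$, the origin can be strictly separated from $K$ by a line, so $K$ lies in an open half-plane whose closure avoids $0$; in particular $K$ avoids a closed ray emanating from the origin, and on the complement of that ray, a simply connected region, the chosen branch of $\log$ extends holomorphically. Hence $E\circ\log$ is holomorphic on a neighborhood of $K$, with derivative $(E\circ\log)'(z)=E'(\log z)/z$ by the chain rule. For $z\in K$ we have $\abs{z}\geq\dist(0,K)>0$, the positivity already noted before the lemma, whence $\abs{(E\circ\log)'(z)}\leq\max_{K}\abs{E'\circ\log}/\dist(0,K)$. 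Because $K$ is convex, for any $z,w\in K$ the segment $[z,w]$ stays in $K$, and integrating the derivative along it gives $\abs{E\log z-E\log w}\leq\pars{\max_{K}\abs{E'\circ\log}/\dist(0,K)}\abs{z-w}$, the desired Lipschitz bound.

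Composing with $f$ then yields $\phi=E\circ\log\circ f$ Lipschitz on $S_\gamma$ with $\Lip(\phi)\leq\pars{\max_{K}\abs{E'\circ\log}/\dist(0,K)}\|f\|_{\Lip(S_\gamma)}$, while $\|\phi\|_\infty=\max_{f(S_\gamma)}\abs{E\circ\log}$ since $f$ maps $S_\gamma$ onto $f(S_\gamma)$. Feeding these into the telescoping identity, the first group $\phi(x)(g(x)-g(y))$ contributes $\max_{f(S_\gamma)}\abs{E\circ\log}\cdot\|g\|_{\Lip(S_\gamma)}$ and the second group $g(y)(\phi(x)-\phi(y))$ contributes $\max_{S_\gamma}\abs{g}\cdot\Lip(\phi)$, and these are exactly the two summands of $M(E,f,g)$; if the convention for $\|\cdot\|_{\Lip(S_\gamma)}$ also includes the sup norm, the resulting sup-norm contribution of $\Phi$ is absorbed into the first summand via $\max_{S_\gamma}\abs{g}\leq\|g\|_{\Lip(S_\gamma)}$. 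I expect the only real obstacle to be the holomorphic extension of $\log$ to all of $K$ together with the convexity argument that upgrades the pointwise derivative bound into a genuine Lipschitz estimate; the chain-rule computation and the final product estimate are then routine.
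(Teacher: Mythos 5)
Your proof is correct and takes essentially the same route as the paper's: the same product telescoping $\Phi(x)-\Phi(y)=\phi(x)\pars{g(x)-g(y)}+g(y)\pars{\phi(x)-\phi(y)}$, the same use of convexity of $\hull(f(S_\gamma))$ to integrate $(E\circ\log)'(z)=E'(\log z)/z$ along line segments, and the same bound $|z|\geq\dist(0,\hull(f(S_\gamma)))$; the paper merely runs everything in a single chain of inequalities instead of first isolating a Lipschitz bound for $E\circ\log$ on the hull. Your explicit separation argument showing that the chosen branch of $\log$ extends holomorphically to a neighborhood of the hull is a point the paper leaves implicit, and is a welcome addition.
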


\begin{proof}
Let $w_1,w_2\in \C$, $z_1,z_2\in \hull(f(S_\gamma))$ and let $l_{z_1}^{z_2}\subset  \hull(f(S_\gamma))$ denote the oriented line segment connecting $z_1$ to $z_2$ so that  
\begin{align*}
|E(\log z_2)w_2&-E(\log z_1)w_1|\\
&=|w_2(E(\log z_2)-E(\log z_1))+E(\log z_1)(w_2-w_1)|\\
&\leq |w_2||E(\log z_2)-E(\log z_1)|+|E(\log z_1)||w_2-w_1|\\
&= |w_2|\left|\int_{l_{z_1}^{z_2}}E'(\log z)z^{-1}dz\right|+|E(\log z_1)||w_2-w_1|\\
&\leq |w_2|\pars{\max_{z\in l_{z_1}^{z_2}}|E'(\log z)z^{-1}|}|z_2-z_1|+|E(\log z_1)||w_2-w_1|
\end{align*}
For any $t_1,t_2\in [0,T]$, we can use this estimate with $w_1=g(\gamma(t_1))$, $w_2=g(\gamma(t_2))$, $z_1=f(\gamma(t_1))$ and $z_2=f(\gamma(t_2))$
 to write 
\begin{align*}
|E&(\log f(\gamma(t_2)))g(\gamma(t_2))-E(\log f(\gamma(t_1)))g(\gamma(t_1))|\\
&\leq |g(\gamma(t_2))|\pars{\max_{z\in l_{f(\gamma(t_1))}^{f(\gamma(t_2))}}|E'(\log z)z^{-1}|}|f(\gamma(t_2))-f(\gamma(t_1))|\\
&\hspace{2cm}+|E(\log f(\gamma(t_1)))||g(\gamma(t_2))-g(\gamma(t_1))|\\
&\leq \max_{S_\gamma}|g|\max_{\hull(f(S_\gamma))}|(E'\circ \log (\cdot))(\cdot)^{-1}|\|f\|_{\Lip(S_\gamma)}|\gamma(t_2)-\gamma(t_1)|\\
&\hspace{2cm}+\max_{f(S_\gamma)}|E\circ \log |\|g\|_{\Lip(S_\gamma)}|\gamma(t_2)-\gamma(t_1)|\\
&\leq \max_{S_\gamma}|g|\max_{\hull(f(S_\gamma))}|E'\circ \log |\|f\|_{\Lip(S_\gamma)} \\
&\hspace{1cm}\times\dist(0,\hull(f(S_\gamma)))^{-1}|\gamma(t_2)-\gamma(t_1)|\\
&\hspace{2cm}+\max_{f(S_\gamma)}|E\circ \log |\|g\|_{\Lip(S_\gamma)}|\gamma(t_2)-\gamma(t_1)|\\
&=M(E,f,g)|\gamma(t_2)-\gamma(t_1)|.
\end{align*}
\end{proof}

Combining Lemma \ref{asdfjkl;7} with the Young-L\'{o}eve integration theory \cites{MR1555421,MR2604669}, we have the following corollary:
\begin{cor} \label{asdfjkl;6}The expression 
\[
 I^f_\gamma(E,g,h)=\int_\gamma (E\circ \log \circ f)gdh
\]
defines a $\C$-trilinear map $I^f_\gamma(\cdot,\cdot,\cdot):\scrO(\C)\times \Lip(S_\gamma)\times \Lip(S_\gamma)\rightarrow \C$ which satisfies the estimate
\begin{align*}
|I^f_\gamma (E,g,h)| &\leq \frac{1}{1-2^{1-2/p}} M(E,f,g)\|h\|_{\Lip(S_\gamma)}\|\gamma\|_{p;[0,T]}^2 \\
&\hspace{.5cm} +|E\circ\log\circ f\circ \gamma(0)||g\circ \gamma(0)||h\circ\gamma(T)-h\circ\gamma(0)|.
\end{align*}
\end{cor}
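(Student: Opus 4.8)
The plan is to recognize $I^f_\gamma(E,g,h)$ as an ordinary one-dimensional Young integral along $\gamma$ and then simply quote the Young--L\'{o}eve estimate; the entire content of the corollary is the assembly of Lemma~\ref{asdfjkl;7} with the standard Young theory. Writing $\phi=(E\circ\log\circ f)g$, the integral is $\int_\gamma\phi\,dh=\int_0^T(\phi\circ\gamma)\,d(h\circ\gamma)$, so the two paths that matter are the $\C$-valued paths $\phi\circ\gamma$ and $h\circ\gamma$ on $[0,T]$.

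First I would record the elementary fact that post-composition with a Lipschitz function cannot increase $p$-variation by more than the Lipschitz constant: for any partition $0=t_0<\cdots<t_m=T$ one has $\sum_i\abs{\phi(\gamma(t_{i+1}))-\phi(\gamma(t_i))}^p\le\|\phi\|_{\Lip(S_\gamma)}^p\sum_i\abs{\gamma(t_{i+1})-\gamma(t_i)}^p$, so that $\|\phi\circ\gamma\|_{p;[0,T]}\le\|\phi\|_{\Lip(S_\gamma)}\|\gamma\|_{p;[0,T]}$ and likewise $\|h\circ\gamma\|_{p;[0,T]}\le\|h\|_{\Lip(S_\gamma)}\|\gamma\|_{p;[0,T]}$. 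Feeding Lemma~\ref{asdfjkl;7} into the first bound yields $\|\phi\circ\gamma\|_{p;[0,T]}\le M(E,f,g)\|\gamma\|_{p;[0,T]}$. Since $1\le p<2$ we have $1/p+1/p=2/p>1$, which is precisely the condition under which the Young integral $\int_0^T(\phi\circ\gamma)\,d(h\circ\gamma)$ exists as a mesh limit of Riemann--Stieltjes sums; this establishes that $I^f_\gamma(E,g,h)$ is well defined. The Young--L\'{o}eve inequality of \cites{MR1555421,MR2604669}, applied with both integrand and integrator of finite $p$-variation, then gives
\begin{align*}
\Bigl|\int_0^T(\phi\circ\gamma)\,d(h\circ\gamma) &-(\phi\circ\gamma)(0)\bigl((h\circ\gamma)(T)-(h\circ\gamma)(0)\bigr)\Bigr| \\
&\le\frac{1}{1-2^{1-2/p}}\|\phi\circ\gamma\|_{p;[0,T]}\|h\circ\gamma\|_{p;[0,T]},
\end{align*}
where the denominator $1-2^{1-2/p}$ is positive exactly because $p<2$ forces $1-2/p<0$.

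The stated estimate then follows by the triangle inequality: I would move the boundary term to the right, bound its modulus using $(\phi\circ\gamma)(0)=E(\log f(\gamma(0)))g(\gamma(0))$ to obtain the factor $\abs{E\circ\log\circ f\circ\gamma(0)}\abs{g\circ\gamma(0)}\abs{h\circ\gamma(T)-h\circ\gamma(0)}$, and substitute the two variation bounds from the previous step into the Young--L\'{o}eve term to produce $\frac{1}{1-2^{1-2/p}}M(E,f,g)\|h\|_{\Lip(S_\gamma)}\|\gamma\|_{p;[0,T]}^2$. Finally, $\C$-trilinearity is immediate from the bilinearity of the Young integral in its integrand and integrator, together with the observations that $E\mapsto E\circ\log\circ f$ and $g\mapsto(E\circ\log\circ f)g$ are $\C$-linear into the integrand and $h\mapsto dh$ is $\C$-linear into the integrator.

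The conceptual work has already been carried out in Lemma~\ref{asdfjkl;7}, so I expect no genuine obstacle here; the only care required is bookkeeping. Specifically, one must match the precise normalization and constant of the Young--L\'{o}eve inequality as stated in the cited references to the convention used above---verifying that the exponent is indeed $1-2/p$ (the case of equal $p$-variation regularity for both factors) and that the resulting constant is finite and positive---and one must confirm the variation estimate for the composed paths, which is where the Lipschitz hypotheses on $(E\circ\log\circ f)g$ and $h$ enter.
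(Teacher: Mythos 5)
Your proposal is correct and is essentially the paper's own proof: the paper likewise applies the Young--L\'{o}eve estimate (in the Friz--Victoir normalization, with constant $\frac{1}{1-2^{1-2/p}}$ and the boundary term) and then bounds $\|(E\circ\log\circ f\circ\gamma)(g\circ\gamma)\|_{p;[0,T]}\leq M(E,f,g)\|\gamma\|_{p;[0,T]}$ and $\|h\circ\gamma\|_{p;[0,T]}\leq\|h\|_{\Lip(S_\gamma)}\|\gamma\|_{p;[0,T]}$ via Lemma~\ref{asdfjkl;7} and the Lipschitz composition bound. You merely spell out the intermediate steps (the $p$-variation bound under Lipschitz post-composition, the condition $2/p>1$, and trilinearity) that the paper leaves implicit.
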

\begin{proof}
From Young's estimate (or rather a variation thereof presented in \cite{MR2604669}, e.g.), 
\begin{align*}
|I^f_\gamma (E,g,h)| &\leq \frac{1}{1-2^{1-2/p}} \|(E\circ \log \circ f\circ \gamma)(g\circ \gamma)\|_{p;[0,T]}\|h\circ\gamma\|_{p;[0,T]} \\
&\hspace{.5cm} +|E\circ\log\circ f\circ \gamma(0)||g\circ \gamma(0)||h\circ\gamma(T)-h\circ\gamma(0)|\\
&\leq \frac{1}{1-2^{1-2/p}} M(E,f,g)\|h\|_{\Lip(S_\gamma)}\|\gamma\|_{p;[0,T]}^2 \\
&\hspace{.5cm} +|E\circ\log\circ f\circ \gamma(0)||g\circ \gamma(0)||h\circ\gamma(T)-h\circ\gamma(0)|.
\end{align*}
\end{proof}

Now we would like to consider simultaneously the family of entire functions $\{E_{k,s}:k\in \N,s\in\C\}$ given by $E_{k,s}(z)=z^ke^{sz}$, thus producing the integrals
\[
I^f_\gamma(E_{k,s},g,h)=\int_\gamma (E_{k,s}\circ \log \circ f)gdh=
\int_\gamma (\log  f)^ke^{s\log f}gdh=
\int_\gamma (\log  f)^kf^s gdh.
\]

\begin{theorem}\label{asdfjkl;11}
For any $k\in\N$, $s\mapsto I^f_\gamma(E_{k,s},g,h)$ defines an entire function.
\end{theorem}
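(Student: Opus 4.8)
The plan is to show that $F_k(s):=I^f_\gamma(E_{k,s},g,h)$ is complex differentiable at every point $s_0\in\C$, with derivative equal to $F_{k+1}(s_0)$; since $s_0$ is arbitrary this exhibits $F_k$ as holomorphic on all of $\C$, hence entire. The candidate for the derivative is dictated by $\partial_s E_{k,s}(z)=\partial_s(z^ke^{sz})=z^{k+1}e^{sz}=E_{k+1,s}(z)$, so the substance of the argument is a rigorous differentiation under the $\int_\gamma$ sign. First I would fix $s_0$ and, for $s\neq s_0$, form the entire function
\[
G_s(z)=\frac{E_{k,s}(z)-E_{k,s_0}(z)}{s-s_0}-E_{k+1,s_0}(z)=z^ke^{s_0 z}\pars{\frac{e^{(s-s_0)z}-1}{s-s_0}-z}.
\]
Because Corollary \ref{asdfjkl;6} establishes that $I^f_\gamma(\cdot,g,h)$ is $\C$-linear in its first slot, this trilinearity immediately gives
\[
\frac{F_k(s)-F_k(s_0)}{s-s_0}-F_{k+1}(s_0)=I^f_\gamma(G_s,g,h),
\]
so it suffices to prove that the right-hand side tends to zero as $s\to s_0$.

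For this I would invoke the explicit bound of Corollary \ref{asdfjkl;6}, which controls $\abs{I^f_\gamma(G_s,g,h)}$ by $M(G_s,f,g)\|h\|_{\Lip(S_\gamma)}\|\gamma\|_{p;[0,T]}^2$ (up to the constant $1/(1-2^{1-2/p})$) together with a boundary term. Since $\|h\|_{\Lip(S_\gamma)}$ and $\|\gamma\|_{p;[0,T]}$ are fixed, everything reduces to showing $M(G_s,f,g)\to 0$ and that the boundary contribution vanishes. Recalling that $M(G_s,f,g)$ is built from $\max_{f(S_\gamma)}\abs{G_s\circ\log}$ and $\max_{\hull(f(S_\gamma))}\abs{G_s'\circ\log}$, the crux is uniform control of $G_s$ and of its $z$-derivative over the compact set $K:=\log(\hull(f(S_\gamma)))$, which is compact and disjoint from the singularity of $\log$ because $0\notin\hull(f(S_\gamma))$ and $\dist(0,\hull(f(S_\gamma)))>0$, as noted above.

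The heart of the matter, and the step I expect to be the main obstacle, is precisely this uniform estimate on $K$. The prefactor $z^ke^{s_0z}$ is bounded on the compact $K$, while the Taylor expansion
\[
\frac{e^{(s-s_0)\zeta}-1}{s-s_0}-\zeta=(s-s_0)\sum_{m\geq 2}\frac{(s-s_0)^{m-2}\zeta^m}{m!}
\]
shows the second factor to be $O(\abs{s-s_0})$ uniformly for $\zeta$ in any bounded set; hence $\sup_{\zeta\in K}\abs{G_s(\zeta)}\to 0$. To handle the derivative term in $M(G_s,f,g)$ I would run the same estimate on a slightly enlarged compact set containing a neighborhood of $K$ and then invoke the Cauchy integral formula: uniform convergence $G_s\to 0$ on a neighborhood of $K$ forces $\sup_K\abs{G_s'}\to 0$ as well, so that $M(G_s,f,g)\to 0$. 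The boundary term is dispatched by the same observation, since $G_s\circ\log\circ f\circ\gamma(0)$ is merely $G_s$ evaluated at the point $\log(f(\gamma(0)))\in K$. Combining these, $I^f_\gamma(G_s,g,h)\to 0$, which is exactly the statement $F_k'(s_0)=F_{k+1}(s_0)$, and the proof is complete.
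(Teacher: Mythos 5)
Your proof is correct and structurally the same as the paper's: the paper likewise reduces, by $\C$-linearity in the first slot, to showing that the error functional applied to $\widetilde{E}_{k,s,s_0}(z)=z^ke^{s_0z}(e^{(s-s_0)z}-1-(s-s_0)z)$ --- which is exactly $(s-s_0)G_s(z)$ in your notation --- is $o(|s-s_0|)$, and it does so by feeding the same three quantities (the sup of the function over $\log(f(S_\gamma))$, the sup of its derivative over $\log(\hull(f(S_\gamma)))$, and the value at $\log f(\gamma(0))$) into the estimate of Corollary \ref{asdfjkl;6}. The one place you genuinely diverge is the derivative term in $M$: the paper computes $\widetilde{E}'_{k,s,s_0}$ explicitly and checks term by term that it is $O(|s-s_0|^2)$ uniformly on bounded sets, whereas you bypass that computation with Cauchy's integral formula, passing from uniform smallness of $G_s$ on a $\delta$-enlargement of the compact set $K=\log(\hull(f(S_\gamma)))$ to uniform smallness of $G_s'$ on $K$ itself. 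Both are valid; the paper's explicit differentiation is self-contained and exhibits the quadratic rate directly, while your Cauchy-estimate step is shorter, avoids any differentiation, and would apply verbatim to any family of entire functions tending to zero uniformly on a neighborhood of $K$ --- at the mild cost of having to introduce the enlarged compact set and note that $G_s$, being entire, is holomorphic there.
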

\begin{proof}
First, it must be proved that $I^f_\gamma(E_{k,(\cdot)},g,h)$ is differentiable. The natural guess for the derivative is of course $I^f_\gamma(E_{k+1,(\cdot)},g,h)$ so we attempt to verify the asymptotic equality 
\[
I^f_\gamma(E_{k,s},g,h)=I^f_\gamma(E_{k,s_o},g,h)+(s-s_o)I^f_\gamma(E_{k+1,s_o},g,h)
+o(|s-s_o|)
\]
for every $s_o\in\C$. However, since $E\mapsto I^f_\gamma(E,g,h)$ is $\C$-linear this is implied by $|I^f_\gamma(\widetilde{E}_{k,s,s_o},g,h)|=o(|s-s_o|)$
for every $s_o\in\C$ where $\widetilde{E}_{k,s,s_o}\in\scrO(\C)$ is given by 
\[
\widetilde{E}_{k,s,s_o}(z)=z^ke^{sz}-z^ke^{s_oz}-(s-s_o)z^{k+1}e^{s_oz}=z^ke^{s_oz}(e^{(s-s_o)z}-1-(s-s_o)z).
\]
The rightmost expression shows that $|\widetilde{E}_{k,s,s_o}(z)|=O(|s-s_o|^2)$ pointwise for every $z$ and uniformly for $z$ in any bounded subset of $\C$. In particular 
\begin{equation}\label{asdfjkl;10}
|\widetilde{E}_{k,s,s_o}\circ \log \circ f(\gamma(0))|=o(|s-s_o|)
\end{equation}
and since $\log$ must map the compact set $f(S_\gamma)$ into another compact set, 
\begin{equation}\label{asdfjkl;8}
\max_{z\in f(S_\gamma)}|\widetilde{E}_{k,s,s_o}(\log z)|=o(|s-s_o|).
\end{equation}
Also, 
\begin{align*}
\widetilde{E}'_{k,s,s_o}(z)&=kz^{k-1}e^{s_oz}(e^{(s-s_o)z}-1-(s-s_o)z)\\
&\hspace{.5cm}+z^ks_oe^{s_oz}(e^{(s-s_o)z}-1-(s-s_o)z)
+z^ke^{s_oz}(s-s_o)(e^{(s-s_o)z}-1)
\end{align*}
thus $|\widetilde{E}'_{k,s,s_o}(z)|=O(|s-s_o|^2)$ pointwise and uniformly in bounded subsets. In particular
\begin{equation}\label{asdfjkl;9}
\max_{z\in \hull(f(S_\gamma))}|\widetilde{E}'_{k,s,s_o}(\log z)|=o(|s-s_o|).
\end{equation}
On combining  (\ref{asdfjkl;10}), (\ref{asdfjkl;8}) and (\ref{asdfjkl;9}), $|I^f_\gamma(\widetilde{E}_{k,s,s_o}, g,h)|=o(|s-s_o|)$ by the estimate given in Corollary \ref{asdfjkl;6}. This proves that $I^f_\gamma(E_{k,(\cdot)},g,h)\in \scrO(\C)$ with derivative $I^f_\gamma(E_{k+1,(\cdot)},g,h)$.
\end{proof}

\section{Proof of theorem \ref{asdfjkl;1}}
Recall from the introduction that $\gamma:[0,T]\to V$ is a path of $p$ variation $(1\leq p<2)$ taking values in the euclidean vector space $V$, with signature $X_\gamma\in \widehat{\bigoplus}_{k\geq 0} V^{\otimes k}$, as described in the introduction. The functions $f,g,h:S_\gamma\to \C$ are 
Lipschitz and the convex hull of the image of $f$ does not contain zero. Our task in this 
section is to prove Theorem \ref{asdfjkl;1}, so we will herein assume that $\log:\hull(f(S_\gamma))\to \C$, if it exists, is the unique branch of the logarithm such that $|\log z|<\log 2$ on $\hull(f(S_\gamma))$. Such a unique logarithm exists, for instance, if $f$ is positive 
and satisfies $1/2<f<2$ on $S_\gamma$, for then $\hull(f(S_\gamma))=f(S_\gamma)$ is a compact  subinterval of $(1/2,2)$ whence $|\log f|<\log 2$. 

Thus, with $F(s)=I^f_\gamma(E_{0,s},g,h)=\int_\gamma f^sgdh$ as in the introduction, we are tasked with 
computing the value $F(s)$ from the known values $\{F(k)\}_{k\in\N}$ which 
come directly from the signature. There is a general procedure developed by Boas and Buck \cites{MR0162914,MR0022601,MR0029985} which can accomplish this task, provided that $F$ satisfies certain growth conditions at infinity. It seems appropriate to briefly describe the procedure rather than simply quoting the relevant results. If $H\in\scrO(\C)$ is a generic entire function which satisfies an estimate of the form $|H(z)|\leq Ae^{B|z|}$ for $z\in (1,\infty)$ then its Laplace transform 
$\scrL H(w)=\int_0^\infty H(z)e^{-wz}dz$
defines a holomorphic function in the region $\{\Re w>B\}$, and it is natural to ask if $\scrL H$ extends to a holomorphic function in the neighborhood of infinity defined by $\{|w|>B\}$. If this is the case then it is easy to deduce what the Taylor coefficients at infinity must be since if $w\in (B,\infty)$ then
\begin{align*}
\scrL H(w)&=\int_0^\infty H(z)e^{-wz}dz\\
&=\sum_{n\geq 0}\frac{H^{(n)}(0)}{n!}\int_0^m z^ne^{-wz}dz+\int_m^\infty H(z)e^{-wz}dz\\
&=\sum_{n\geq 0}\frac{H^{(n)}(0)}{n!}\frac{1}{w^{n+1}}\int_0^m (wz)^{(n+1)-1}e^{-wz}d(wz)+\int_m^\infty H(z)e^{-wz}dz.
\end{align*}
By letting $m\rightarrow \infty$ the remainder tends to zero and we recognize $\Gamma(n+1)=n!$ in each term so that $\scrL H(w)=\sum_{n\geq 0}H^{(n)}(0)/w^{n+1}$ on $(B,\infty)$. Therefore, $\scrL H $ will extend to the region $\{|w|>B\}$ provided that $\limsup_{n\to\infty} |H^{(n)}(0)|^{1/n}< B$ 
for then
\[
\limsup_{n\to \infty}\abs{H^{(n)}(0)/w^{n+1}}^{1/n}
=
\frac{1}{|w|}\limsup_{n\to \infty} \frac{|H^{(n)}(0)|^{1/n}}{|w|^{1/n}}
<1
\]
This will be the case if the estimate $|H(z)|< Ae^{B|z|}$ holds for all $z$ and not only for $z\in (1,\infty)$, for then by Cauchy's estimate $|H^{(n)}(0)|< n!Ae^{Br}/r^n$ for all $r>0$ and this is minimized at $r=n/B$ so that $|H^{(n)}(0)|< n!AB^ne^{n}/n^n$. Therefore
\begin{align*}
|H^{(n)}(0)|^{1/n}&< A^{1/n}B\frac{e(n!)^{1/n}}{n}=(2\pi n)^{1/2n}A^{1/n}B\frac{e(n!)^{1/n}}{n(2\pi n)^{1/2n}}
\end{align*}
so that $\limsup_{n\to\infty}|H^{(n)}(0)|^{1/n}< B$, by Stirling's estimate. Thus, the power series $\scrB H(w)=\sum_{n\geq 0}H^{(n)}(0)/w^{n+1}$ converges absolutely to an analytic function, uniformly on compact subsets of the region $\{|w|>B\}$, and therefore defines a holomorphic function in a neighborhood of $\infty \in \P^1_\C$, taking the value $0$ at $\infty$ and extending $\scrL H$. The extension $\scrB H$ of $\scrL H$ is usually referred to as the Borel transform of $H$.

We can invert this procedure as follows. If $r>B$ then for fixed $z$ both power series $\scrB H(w)=\sum_{n\geq 0}H^{(n)}(0)/w^{n+1}$ and $e^{zw}=\sum_{n\geq 0}z^nw^n/n!$ converge absolutely and uniformly on the circle $\{|w|=r\}$ and therefore
\[
\int_{|w|=r}\scrB H(w)e^{zw}dw=\sum_{n\geq 0}\pars{\sum_{k\geq0}\frac{H^{(k)}(0)}{n!}\int_{|w|=r}w^{n-k-1}dw}z^n,
\]
but $\int_{|w|=r}w^{n-k-1}dw$ is nonzero only if $n=k$ so evidently 
\begin{equation}
H(z)=\frac{1}{2\pi i}\int_{|w|=r}\scrB H(w)e^{zw}dw.
\end{equation} 
This is called the P\'{o}lya representation of $H$, it is valid not only for the contour $\{|w|=r\}$ but any simple closed contour contained in $\{|w|>B\}$ and it suggests a generalization, due to Buck \cites{MR0022601,MR0029985}, which will allow us to compute any value $H(s)$ from $\{H(k)\}_{k\in \N}$ and thus prove Theorem \ref{asdfjkl;1} by substituting $F(s)=\int_\gamma f^sgdh$ for $H$. The essence of Buck's method is that rather than settling only for the series expansion $e^{zw}=\sum_{n\geq 0}z^nw^n/n!$, we can choose to write $e^{zw}$ in any of a number of different ways. In particular we will be interested in the binomial series expansion:
\[
e^{zw}=(e^w)^z=(e^w-1+1)^z
=\sum_{n\geq 0}\binom{z}{n}(e^{w}-1)^n,
\]
which is valid in the region defined by $|e^w-1|<1$. 

\begin{lemma}\label{asdfjkl;20}
The inclusion $\{|w|=r\}\subset \{|e^w-1|<1\}$ holds if and only if $r<\log 2$.
\end{lemma}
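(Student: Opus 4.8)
The plan is to characterize the circle $\{|w|=r\}$ being contained in the lens-shaped region $\{|e^w-1|<1\}$ directly. The region $\{|e^w-1|<1\}$ is the preimage of the open unit disk centered at $1$ under the exponential map; its boundary is $\{|e^w-1|=1\}$, i.e. the set where $e^w$ lies on the circle of radius $1$ about $1$. So I would study the function $r\mapsto \min_{|w|=r}(|e^w-1|)$ and $r\mapsto \max_{|w|=r}|e^w-1|$ and locate where the circle first touches the boundary curve.

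First I would parametrize $w=re^{i\theta}$ and consider $\phi(\theta)=|e^{w}-1|^2=|e^{re^{i\theta}}-1|^2$. Writing $w=u+iv$ with $u=r\cos\theta$, $v=r\sin\theta$, one has
\[
|e^w-1|^2=e^{2u}-2e^u\cos v+1.
\]
The containment $\{|w|=r\}\subset\{|e^w-1|<1\}$ is equivalent to $\max_{|w|=r}|e^w-1|^2<1$, i.e. $e^{2u}-2e^u\cos v<0$ for all points of the circle, which simplifies to $e^u<2\cos v$ whenever $\cos v>0$ (and is violated whenever $\cos v\le 0$). The key observation is that the extreme case occurs along the positive real axis: the point $w=r$ (where $\theta=0$, $u=r$, $v=0$) maximizes $e^u$ while making $\cos v=1$ as large as possible, so the worst point on the circle for the inequality is the one nearest the positive real direction. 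I expect the binding constraint to reduce to $e^r<2$, i.e. $r<\log 2$.

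The main obstacle will be making rigorous the claim that $w=r$ is genuinely the extremal point, rather than merely plausible. I would handle this by showing that the real part $u=r\cos\theta$ is maximized at $\theta=0$, and that at the same time the factor $\cos v=\cos(r\sin\theta)$ is also maximized there (since $|r\sin\theta|$ is smallest at $\theta=0$), so both competing effects in the inequality $e^u<2\cos v$ push the hardest at $\theta=0$; more carefully, for $r<\pi$ one checks that $e^{r\cos\theta}-2\cos(r\sin\theta)$ is maximized over $\theta$ at $\theta=0$ by a monotonicity/convexity argument in $\theta\in[0,\pi]$. Once the extremum is pinned to $\theta=0$, the inequality $\max_{|w|=r}|e^w-1|^2<1$ collapses to $e^{2r}-2e^r<0$, i.e. $e^r<2$, giving $r<\log 2$.

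Conversely, if $r\ge\log 2$ then the point $w=r$ satisfies $|e^w-1|=e^r-1\ge 1$, so the circle meets the complement of $\{|e^w-1|<1\}$ and the inclusion fails; this gives the ``only if'' direction immediately and confirms that $\log 2$ is exactly the threshold. Thus the proof splits cleanly: the easy direction exhibits the failure point $w=r$ on the positive real axis, and the harder direction verifies that this single point is the extremal obstruction to the inclusion, so that no other point on the circle can violate $|e^w-1|<1$ once $r<\log 2$.
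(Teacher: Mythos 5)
Your reduction of the inclusion to the pointwise inequality $e^u<2\cos v$ on the circle, and your ``only if'' direction (the point $w=r$ itself violates $|e^w-1|<1$ once $r\ge\log 2$), are both correct, and your overall strategy matches the paper's, which also pins the extremum to $(\pm r,0)$. But the hard direction of the lemma is precisely the claim you defer: that for $r<\log 2$ the function $g(\theta)=e^{r\cos\theta}-2\cos(r\sin\theta)$ attains its maximum at $\theta=0$. As written, your justification for this is flawed: you say that $\cos v=\cos(r\sin\theta)$ is also maximized at $\theta=0$ and that therefore ``both competing effects push the hardest'' there, but maximizing $\cos v$ makes the inequality $e^u<2\cos v$ \emph{easier} to satisfy at $\theta=0$, not harder. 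The two effects pull in opposite directions --- that is exactly why the extremal-point claim needs an argument, and it is the part on which the paper spends essentially its whole proof (a critical-point analysis on the circle, which reduces to the positivity of $\sin y/y-\cos y-(\sin y/y)\log(\sin y/y)$ on $(-\pi/2,0)\cup(0,\pi/2)$). Writing that ``one checks'' the maximization ``by a monotonicity/convexity argument'' leaves the crux of the lemma unproven.

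The gap is fillable along the lines you propose, and in fact rather cleanly. For $0<r<\log 2$ compute
\[
g'(\theta)=r\left[2\cos\theta\,\sin(r\sin\theta)-\sin\theta\,e^{r\cos\theta}\right].
\]
For $\theta\in(\pi/2,\pi)$ both terms are negative (using $0<r\sin\theta<\pi$). For $\theta\in(0,\pi/2]$, the bounds $\sin x\le x$ for $x\ge 0$ and $e^t>2t$ for all real $t$ (the minimum of $e^t-2t$ is $2-2\log 2>0$) give
\[
2\cos\theta\,\sin(r\sin\theta)\le 2r\cos\theta\,\sin\theta<\sin\theta\,e^{r\cos\theta},
\]
so $g'<0$ on $(0,\pi)$; since $g(-\theta)=g(\theta)$, the maximum of $g$ over the circle is $g(0)=e^r-2$, which is negative precisely when $r<\log 2$. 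With this inserted your proof is complete, and it is genuinely different from the paper's: a direct monotonicity argument in the parametrization, arguably more elementary than the paper's constrained-extremum computation, at the price of the small trick of factoring out $e^u$ before differentiating.
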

\begin{proof}
If $w=x+iy$ then $|e^w-1|^2=e^{2x}-2e^x\cos y+1$, so the first observation to be made is that
if $|w|=r$ and $|e^w-1|<1$ then $r<\pi/2$ necessarily, for otherwise the circle 
$\{|x+iy|=r\}$ contains points with $\cos y<0$ which would imply $|e^w-1|^2=e^{2x}-2e^x\cos y+1>1$. Having reduced consideration to $r<\pi/2$, we observe that $(x,y)\mapsto e^{2x}-2e^x\cos y+1$ can achieve a maximum at a point $(x,y)$ in the circle $x^2+y^2=r^2$ only if its gradient is orthogonal to $(y,-x)$, or in other words only if $ye^x-y\cos y-x\sin y=0$. Since $(\pm r, 0)$ can be checked individually we only care about the case $0<|y|<\pi/2$ and the necessary condition in this case reduces to $e^x-\cos y-(\sin y/y) x=0$ with $\cos y,\sin y/y>0$ so that the minimum value of $x\mapsto e^x-\cos y-(\sin y/y) x$ is $(\sin y/y)-\cos y -(\sin y/y)\log (\sin y/y)$, but this is positive for $y\in (-\pi/2,0)\cup (0,\pi/2)$ and so $ye^x-y\cos y-x\sin y=0$ and $|y|<\pi/2$ imply $y=0$. Thus, the extremal values of $(x,y)\mapsto e^{2x}-2e^x\cos y+1$ on the circle $\{|x+iy|=r\}$ 
must occur at $(\pm r, 0)$. The maximum and minimum values are therefore $e^{ 2r}-2e^{ r}+1$ and 
$e^{-2r}-2e^{-r}+1$ respectively and one 
finds $r=\log 2$ as the threshold value for the inclusion of sets stated in the lemma. 
\end{proof}

So, if $H$ is such that $|H(z)|\leq Ae^{B|z|}$ with $B<\log 2$ then $r$ can be chosen such that $\{|w|=r\}$ lies in both the region of absolute convergence of the Borel transform $\scrB H$ \emph{and} the region of absolute convergence of the series $e^{zw}=\sum_{n\geq 0}\binom{z}{n}(e^w-1)^n$ and therefore
\begin{align*}
H(z)&=\frac{1}{2\pi i}\int_{|w|=r}\scrB H(w)e^{zw}dw \\
&=\sum_{n\geq 0}\binom{z}{n}\frac{1}{2\pi i}\int_{|w|=r}\scrB H(w)(e^w-1)^ndw \\
&=\sum_{n\geq 0}\binom{z}{n}\sum_{0\leq k\leq n}\binom{n}{k}(-1)^{n-k}\frac{1}{2\pi i}\int_{|w|=r}\scrB H(w)e^{kw}dw \\
&=\sum_{n\geq 0}\binom{z}{n}\sum_{0\leq k\leq n}\binom{n}{k}(-1)^{n-k}H(k) \\
\end{align*}
and therefore 
\begin{equation}\label{asdfjkl;14}
H(z)=\sum_{0\leq n}(-1)^n\binom{z}{n}\sum_{0\leq k\leq n}(-1)^{k}\binom{n}{k}H(k).
\end{equation}

To finish the proof of Theorem \ref{asdfjkl;1} we need only to observe that 
the estimate $|\log f|<\log 2$ implies the required growth condition $|F(s)|\leq Ae^{B|s|}$ 
with $B<\log 2$. This is a simple consequence of the results of section 2, specifically Corollary \ref{asdfjkl;6}.

\section{The winding number}
Recall from the introduction that if in addition to the standing hypotheses we assume that $\gamma$ is a 
closed path, then
\[
W_\gamma(x_1\wedge x_2;\xi_1,\xi_2)=\frac{1}{2\pi}\int_\gamma\frac{(x_1-\xi_1) dx_2-(x_2-\xi_2) dx_1}{(x_1-\xi_1)^2+(x_2-\xi_2)^2}
\]
is the $x_1\wedge x_2$-oriented winding number around the codimension two affine submanifold $\{x_1=\xi_1,x_2=\xi_2\}$ and it can be computed using Theorem \ref{asdfjkl;1} and the signature $X_\gamma$ since $W_\gamma(x_1\wedge x_2;\xi_1,\xi_2)=\frac{1}{2\pi} F(-1)$ where $F$ is the entire function defined by 
\[
F(s)=\int_\gamma[(x_1-\xi_1)^2+(x_2-\xi_2)^2]^s ((x_1-\xi_1)dx_2-(x_2-\xi_2) dx_1).
\]
Our first task in this section is to prove Theorem \ref{asdfjkl;101}.
By (\ref{asdfjkl;14}),
\[
2\pi W_\gamma(x_1\wedge x_2;\xi_1,\xi_2) 
=\sum_{0\leq n}\sum_{0\leq k\leq n}(-1)^k\binom{n}{k}F(k)
\]
provided the standing hypothesis $1/2<(x_1-\xi_1)^2+(x_2-\xi_2)^2<2$ is satisfied.
Now the values $F(k)$ for $k\in\N$ can be computed from the signature in a rather explicit fashion using (\ref{asdfjkl;200}):

\begin{align}
F(k)&=\int_\gamma[(x_1-\xi_1)^2+(x_2-\xi_2)^2]^k((x_1-\xi_1)dx_2-(x_2-\xi_2) dx_1) \notag \\
&=\sum_{k_1+k_2=k}\frac{k!}{k_1!k_2!}\int_\gamma(x_1-\xi_1)^{2k_1}(x_2-\xi_2)^{2k_2}((x_1-\xi_1)dx_2-(x_2-\xi_2) dx_1)\notag \\
&=\sum_{k_1+k_2=k}\frac{k!}{k_1!k_2!} \sum_{\begin{subarray}{c} j_1^1+j_1^2=2k_1 \\ j_2^1+j_2^2=2k_2\end{subarray}} 
\frac{(2k_1)!}{j_1^1!j_1^2!}\frac{(2k_2)!}{j_2^1!j_2^2!} \angles{T^{j_1^1,j_1^2}_{j_2^1,j_2^2},X_\gamma}\notag
\end{align}
where $T^{j_1^1,j_1^2}_{j_2^1,j_2^2}\in V^{\ast\otimes(j_1^1+j_2^2+1)}\bigoplus V^{\ast\otimes(j_1^1+j_2^2+2)}$
is the dual tensor
\begin{align}
T^{j_1^1,j_1^2}_{j_2^1,j_2^2}
&=(x_1\circ\gamma(0)-\xi_1)^{j_1^2}(x_2\circ\gamma(0)-\xi_2)^{j_2^1} \label{asdfjkl;300}\\
&\hspace{.5cm}\times
 \left( \sum_{\sigma\in\mathfrak{S}_{j_1^1+j_2^2+1}}
\sigma[x_1^{\otimes(j_1^1+1)}\otimes x_2^{\otimes j_2^2}]\otimes x_2-\sigma[x_1^{\otimes j_1^1}\otimes x_2^{\otimes (j_2^2+1)}]\otimes x_1\right. \notag \\
&\hspace{2.5cm}+
  (x_1\circ\gamma(0)-\xi_1)\sum_{\sigma\in\mathfrak{S}_{j_1^1+j_2^2}}
\sigma[x_1^{\otimes j_1^1}\otimes x_2^{\otimes j_2^2}]\otimes x_2 \notag \\
&\hspace{2.5cm}-
  \left. (x_2\circ\gamma(0)-\xi_2)\sum_{\sigma\in\mathfrak{S}_{j_1^1+j_2^2}}
  \sigma[x_1^{\otimes j_1^1}\otimes x_2^{\otimes  j_2^2}]\otimes x_1\right) \notag 
\end{align}
This completes the proof of Theorem \ref{asdfjkl;101}.

For computational purposes one should exploit the fact that the winding number is an integer, and as such it is known once it is known within an error strictly less than $1/2$. Specifically, if $-\log 2 < -r < \log[(x_1-\xi_1)^2+(x_2-\xi_2)^2] < r <\log 2$
then for any $N$, 
\begin{align*}
&\abs{2\pi W_\gamma(x_1\wedge x_2;\xi_1,\xi_2)   -\sum_{0\leq n\leq N}\sum_{0\leq k\leq n}(-1)^{k}\binom{n}{k}F(k)} \\
&\hspace{3cm}=\abs{\sum_{N+1\leq n}\sum_{0\leq k\leq n}(-1)^{k}\binom{n}{k}F(k)} \\
&\hspace{3cm}=\abs{\sum_{N+1\leq n}\frac{(-1)^{n}}{2\pi i}\int_{|w|=r}\scrB F(w)(e^w-1)^ndw} \\
&\hspace{3cm}\leq \frac{1}{2\pi}\|\scrB F\|_{L^1(\{|w|=r\})}\sum_{N+1\leq n}\|e^{(\cdot)}-1\|_{L^\infty(\{|w|=r\})}^n \\
&\hspace{3cm}=\frac{1}{2\pi}\|\scrB F\|_{L^1(\{|w|=r\})}\frac{(e^{2r}-2e^{r}+1)^{N+1}}{2e^{r}-e^{2r}}
\end{align*}

by Lemma \ref{asdfjkl;20}. We have proved:

\begin{cor}\label{asdfjkl;100}
If $N\in\N$, $-\log 2<-r< \log[(x_1-\xi_1)^2+(x_2-\xi_2)^2] < r<\log 2$
and
\[
\|\scrB F\|_{L^1(\{|w|=r\})}\frac{(e^{2r}-2e^{r}+1)^{N+1}}{2e^{r}-e^{2r}}<2\pi^2
\]
then $W_\gamma(x_1\wedge x_2;\xi_1,\xi_2)$ is equal to the integer nearest the finite sum 
\[
\frac{1}{2\pi}\sum_{0\leq n\leq N}\sum_{0\leq k\leq n}(-1)^{k}\binom{n}{k}\sum_{k_1+k_2=k}\frac{k!}{k_1!k_2!} \sum_{\begin{subarray}{c} j_1^1+j_1^2=2k_1 \\ j_2^1+j_2^2=2k_2\end{subarray}} 
\frac{(2k_1)!}{j_1^1!j_1^2!}\frac{(2k_2)!}{j_2^1!j_2^2!} \angles{T^{j_1^1,j_1^2}_{j_2^1,j_2^2},X_\gamma}
\]
where $T^{j_1^1,j_1^2}_{j_2^1,j_2^2}\in V^{\ast\otimes(j_1^1+j_2^2+1)}\bigoplus V^{\ast\otimes(j_1^1+j_2^2+2)}$ is defined in (\ref{asdfjkl;300}).
\end{cor}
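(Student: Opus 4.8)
The plan is to read the Corollary off the chain of inequalities established in the display immediately preceding its statement, combined with the fact that the winding number is an integer. That display already proves
\[
\abs{2\pi W_\gamma(x_1\wedge x_2;\xi_1,\xi_2)-\sum_{0\leq n\leq N}\sum_{0\leq k\leq n}(-1)^k\binom{n}{k}F(k)}\leq\frac{1}{2\pi}\|\scrB F\|_{L^1(\{|w|=r\})}\frac{(e^{2r}-2e^r+1)^{N+1}}{2e^r-e^{2r}},
\]
so all of the analytic content is already in hand: the Borel transform $\scrB F$ is defined on the contour $\{|w|=r\}$ because the standing hypotheses force $f=(x_1-\xi_1)^2+(x_2-\xi_2)^2$ to take values in $(1/2,2)$, whence $|\log f|<\log 2$ and both Theorem \ref{asdfjkl;1} and the growth estimate of Corollary \ref{asdfjkl;6} apply; and the requirement $r<\log 2$ places the contour inside $\{|e^w-1|<1\}$ by Lemma \ref{asdfjkl;20}, which licenses the binomial expansion of $e^{zw}$ and makes the tail $\sum_{N+1\leq n}\|e^{(\cdot)}-1\|_{L^\infty(\{|w|=r\})}^n$ a convergent geometric series summing to the displayed closed form.

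First I would feed the hypothesis $\|\scrB F\|_{L^1(\{|w|=r\})}(e^{2r}-2e^r+1)^{N+1}/(2e^r-e^{2r})<2\pi^2$ into the estimate above and divide through by $2\pi$, which yields
\[
\abs{W_\gamma(x_1\wedge x_2;\xi_1,\xi_2)-\frac{1}{2\pi}\sum_{0\leq n\leq N}\sum_{0\leq k\leq n}(-1)^k\binom{n}{k}F(k)}<\frac{2\pi^2}{(2\pi)^2}=\frac12.
\]
Because each $F(k)$ is a Young integral of real $1$-forms along the real path $\gamma$, the truncated sum is a real number, so this inequality places it strictly within distance $1/2$ of $W_\gamma$.

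Next I would establish that $W_\gamma(x_1\wedge x_2;\xi_1,\xi_2)$ is an integer. Writing $u=x_1\circ\gamma-\xi_1$ and $v=x_2\circ\gamma-\xi_2$, the standing hypothesis $1/2<u^2+v^2<2$ guarantees that $t\mapsto(u(t),v(t))$ is a closed planar loop (since $\gamma$ is closed) of finite $p$ variation that never meets the origin, and the defining integral is $\frac{1}{2\pi}\int_\gamma(u\,dv-v\,du)/(u^2+v^2)$, i.e. $\frac{1}{2\pi}$ times the total increment of a continuous argument $\theta$ of $u+iv$ along this loop; since the loop is closed that increment lies in $2\pi\Z$, so $W_\gamma\in\Z$. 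Combining this with the strict bound of the previous paragraph, $W_\gamma$ is the unique integer nearest the real number $\frac{1}{2\pi}\sum_{0\leq n\leq N}\sum_{0\leq k\leq n}(-1)^k\binom{n}{k}F(k)$, with no boundary ambiguity precisely because the distance is strictly less than $1/2$. It then remains only to substitute, into this truncated sum, the signature expansion of each $F(k)$ obtained in the proof of Theorem \ref{asdfjkl;101}, namely $F(k)=\sum_{k_1+k_2=k}\frac{k!}{k_1!k_2!}\sum_{j_1^1+j_1^2=2k_1,\,j_2^1+j_2^2=2k_2}\frac{(2k_1)!}{j_1^1!j_1^2!}\frac{(2k_2)!}{j_2^1!j_2^2!}\angles{T^{j_1^1,j_1^2}_{j_2^1,j_2^2},X_\gamma}$, which reproduces the displayed nearest-integer target.

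The only genuinely substantive step is the integrality claim, and I expect it to be the main obstacle: one must know that the winding number of a merely finite $p$ variation closed loop (not a smooth one) about the point it avoids is integer-valued, which amounts to justifying the identity $d\theta=(u\,dv-v\,du)/(u^2+v^2)$ in the Young sense and the continuous lifting of $\theta$. I would secure this from the continuous lifting of the argument along $(u,v)$, available because the uniform lower bound $u^2+v^2>1/2$ keeps the loop off the singular locus; every remaining step is the arithmetic of dividing an already-proved estimate by $2\pi$ and rounding.
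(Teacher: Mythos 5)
Your proposal is correct and follows essentially the same route as the paper: the paper's proof of this corollary is exactly the tail estimate in the display preceding it (expressing the tail as contour integrals of $\scrB F(w)(e^w-1)^n$ over $\{|w|=r\}$, summing the resulting geometric series via Lemma \ref{asdfjkl;20}), after which the hypothesis gives an error below $\pi$ for $2\pi W_\gamma$, i.e.\ below $1/2$ for $W_\gamma$, and one rounds to the nearest integer and substitutes the signature expansion of the $F(k)$. The only content you add beyond the paper is an explicit justification that $W_\gamma(x_1\wedge x_2;\xi_1,\xi_2)$ is an integer (continuous lifting of the argument along the closed loop, valid since $u^2+v^2>1/2$ keeps it off the singular locus), a fact the paper simply asserts; this is a reasonable piece of diligence, not a different approach.
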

The winding number is therefore computable from only finitely many terms in the signature, and an estimate on the number of terms needed can be computed directly from an estimate of $\|\scrB F\|_{L^1(\{|w|=r\})}$. Such an estimate can be obtained in the general case $1\leq p<2$ from Corollary \ref{asdfjkl;6}, but we will only state the result precisely for the bounded variation case. 

If $\log[(x_1-\xi_1)^2+(x_2-\xi_2)^2] \leq \rho$ then uniformly on $\gamma([0,T])$,
\[
|x_1-\xi_1|\leq \sqrt{(x_1-\xi_1)^2+(x_2-\xi_2)^2}\leq e^{\rho/2}
\]
and likewise $|x_2-\xi_2|\leq e^{\rho/2}$. Therefore, if in addition to the standing hypotheses
we also assume that $\gamma$ is of bounded variation then
\begin{align*}
|F^{(n)}(0)|&=\abs{ \int_\gamma(\log [(x_1-\xi_1)^2+(x_2-\xi_2)^2])^n((x_1-\xi_1) dx_2-(x_2-\xi_2)dx_1)} \\
&\leq \rho^ne^{\rho/2}(\length(x_2 \circ \gamma)+\length(x_1 \circ \gamma))
\end{align*}
provided that the lower bound $-\rho\leq \log[(x_1-\xi_1)^2+(x_2-\xi_2)^2]$ holds as well (so that
$\log[(x_1-\xi_1)^2+(x_2-\xi_2)^2]$ is bounded in absolute value by $\rho$).
Thus, if $\rho<|w|$ then 
\begin{align*}
|\scrB F(w)|&=\abs{\sum_{n\geq 0}F^{(n)}(0)/w^{n+1}} \\
&\leq \sum_{n\geq 0}\frac{\rho^ne^{\rho/2}(\length(x_2 \circ \gamma)+\length(x_1 \circ \gamma))}{|w|^{n+1}} \\
&=\frac{e^{\rho/2}(\length(x_2 \circ \gamma)+\length(x_1 \circ \gamma))}{|w|-\rho} 
\end{align*}
and therefore if $-\log 2<-r<\rho \leq  \log[(x_1-\xi_1)^2+(x_2-\xi_2)^2] \leq \rho < r<\log 2$
then
\begin{align*}
&\abs{2\pi W_\gamma(x_1\wedge x_2;\xi_1,\xi_2)   -\sum_{0\leq n\leq N}\sum_{0\leq k\leq n}(-1)^{k}\binom{n}{k}F(k)} \\
&\hspace{3cm}\leq \frac{1}{2\pi} \|\scrB F\|_{L^1(\{|w|=r\})}\frac{(e^{2r}-2e^{r}+1)^{N+1}}{2e^{r}-e^{2r}}\\
&\hspace{3cm}\leq  r\frac{e^{\rho/2}(\length(x_2 \circ \gamma)+\length(x_1 \circ \gamma))}{r-\rho}\frac{(e^{2r}-2e^{r}+1)^{N+1}}{2e^{r}-e^{2r}}.
\end{align*}

We have proved:
\begin{cor}\label{asdfjkl;30}
If in addition to the standing hypotheses, we also assume that $\gamma$ is of bounded variation, and if $N\in\N$ and $\rho,r>0$ are chosen so that 
\[
-\log 2<-r <-\rho\leq \log[(x_1-\xi_1)^2+(x_2-\xi_2)^2] \leq \rho < r<\log 2
\]
and  
\[
r\frac{e^{\rho/2}(\length(x_2 \circ \gamma)+\length(x_1 \circ \gamma))}{r-\rho}\frac{(e^{2r}-2e^{r}+1)^{N+1}}{2e^{r}-e^{2r}}<\pi
\]
then
$W_\gamma(x_1\wedge x_2;\xi_1,\xi_2)$ is equal to the integer nearest the finite sum 
\[
\frac{1}{2\pi}\sum_{0\leq n\leq N}\sum_{0\leq k\leq n}(-1)^{k}\binom{n}{k}\sum_{k_1+k_2=k}\frac{k!}{k_1!k_2!} \sum_{\begin{subarray}{c} j_1^1+j_1^2=2k_1 \\ j_2^1+j_2^2=2k_2\end{subarray}} 
\frac{(2k_1)!}{j_1^1!j_1^2!}\frac{(2k_2)!}{j_2^1!j_2^2!} \angles{T^{j_1^1,j_1^2}_{j_2^1,j_2^2},X_\gamma}
\]
where $T^{j_1^1,j_1^2}_{j_2^1,j_2^2}\in V^{\ast\otimes(j_1^1+j_2^2+1)}\bigoplus V^{\ast\otimes(j_1^1+j_2^2+2)}$ is defined in (\ref{asdfjkl;300}).
\end{cor}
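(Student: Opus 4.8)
The plan is to assemble the corollary from three ingredients already in hand: the exact interpolation series of Theorem \ref{asdfjkl;101}, the contour tail bound developed for Corollary \ref{asdfjkl;100}, and an explicit estimate of the Borel transform $\scrB F$ that the bounded variation hypothesis supplies; the final twist is that $W_\gamma(x_1\wedge x_2;\xi_1,\xi_2)$ is an integer, so an approximation with total error below $1/2$ determines it exactly. Writing $F(s)=\int_\gamma[(x_1-\xi_1)^2+(x_2-\xi_2)^2]^s((x_1-\xi_1)\,dx_2-(x_2-\xi_2)\,dx_1)$, recall that $2\pi W_\gamma=F(-1)$, and that evaluating (\ref{asdfjkl;14}) at $z=-1$ (where $(-1)^n\binom{-1}{n}=1$) gives $2\pi W_\gamma=\sum_{0\leq n}\sum_{0\leq k\leq n}(-1)^k\binom{n}{k}F(k)$. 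Substituting the signature expansion of each $F(k)$ from Theorem \ref{asdfjkl;101}, the finite sum displayed in the statement is precisely $\frac{1}{2\pi}$ times the partial sum over $0\leq n\leq N$, so the whole corollary reduces to bounding the tail $\sum_{n>N}$ by less than $\pi$ in absolute value.

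First I would reproduce the tail estimate underlying Corollary \ref{asdfjkl;100}: rewrite each inner sum $\sum_{0\leq k\leq n}(-1)^k\binom{n}{k}F(k)$ as $\frac{(-1)^n}{2\pi i}\int_{|w|=r}\scrB F(w)(e^w-1)^n\,dw$ through the P\'{o}lya representation, estimate in absolute value by $\frac{1}{2\pi}\|\scrB F\|_{L^1(\{|w|=r\})}\sum_{n>N}\|e^{(\cdot)}-1\|_{L^\infty(\{|w|=r\})}^n$, and invoke Lemma \ref{asdfjkl;20} to sum this geometric series to $\frac{(e^{2r}-2e^r+1)^{N+1}}{2e^r-e^{2r}}$, the ratio being below $1$ exactly because $r<\log 2$. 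Nothing beyond Corollary \ref{asdfjkl;100} is needed here.

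The substantive step, and the one I expect to demand the most care, is the explicit bound on $\|\scrB F\|_{L^1(\{|w|=r\})}$ that the bounded variation assumption makes possible. I would use Theorem \ref{asdfjkl;11} to identify the Taylor coefficients $F^{(n)}(0)=\int_\gamma(\log[(x_1-\xi_1)^2+(x_2-\xi_2)^2])^n((x_1-\xi_1)\,dx_2-(x_2-\xi_2)\,dx_1)$ and estimate each by $\rho^n e^{\rho/2}(\length(x_1\circ\gamma)+\length(x_2\circ\gamma))$: the factor $\rho^n$ comes from $|\log[(x_1-\xi_1)^2+(x_2-\xi_2)^2]|\leq\rho$, the factor $e^{\rho/2}$ from $|x_i-\xi_i|\leq\sqrt{(x_1-\xi_1)^2+(x_2-\xi_2)^2}\leq e^{\rho/2}$, and the length factors from the elementary bounded variation bound $|\int_\gamma\phi\,d\psi|\leq\|\phi\|_\infty\length(\psi)$ applied with $\psi=x_1\circ\gamma,x_2\circ\gamma$. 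Summing $\scrB F(w)=\sum_n F^{(n)}(0)/w^{n+1}$ as a geometric series in $1/|w|$ then yields $|\scrB F(w)|\leq e^{\rho/2}(\length(x_1\circ\gamma)+\length(x_2\circ\gamma))/(|w|-\rho)$ for $|w|=r>\rho$, and integrating over the circle gives $\|\scrB F\|_{L^1(\{|w|=r\})}\leq 2\pi r\,e^{\rho/2}(\length(x_1\circ\gamma)+\length(x_2\circ\gamma))/(r-\rho)$.

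Finally I would combine the pieces: multiplying the $L^1$ bound by $\frac{1}{2\pi}$ and by the geometric tail factor shows that $\bigl|2\pi W_\gamma-\sum_{0\leq n\leq N}\sum_{0\leq k\leq n}(-1)^k\binom{n}{k}F(k)\bigr|$ is at most $r\,\frac{e^{\rho/2}(\length(x_1\circ\gamma)+\length(x_2\circ\gamma))}{r-\rho}\,\frac{(e^{2r}-2e^r+1)^{N+1}}{2e^r-e^{2r}}$, which the hypothesis forces to be less than $\pi$; dividing by $2\pi$ shows the displayed finite sum differs from the integer $W_\gamma$ by less than $1/2$ and hence equals the nearest integer to it. The only bookkeeping is to verify that the chain $-\log2<-r<-\rho\leq\log[(x_1-\xi_1)^2+(x_2-\xi_2)^2]\leq\rho<r<\log2$ simultaneously supplies $r<\log2$ for Lemma \ref{asdfjkl;20}, the uniform bound $|\log[(x_1-\xi_1)^2+(x_2-\xi_2)^2]|\leq\rho$ for the coefficient estimate, and $\rho<r$ for convergence of both the Borel series and the geometric tail.
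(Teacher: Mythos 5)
Your proposal is correct and follows essentially the same route as the paper's own proof: the interpolation series (\ref{asdfjkl;14}) evaluated at $z=-1$, the P\'{o}lya-representation tail bound with Lemma \ref{asdfjkl;20} exactly as in Corollary \ref{asdfjkl;100}, the coefficient estimate $|F^{(n)}(0)|\leq \rho^n e^{\rho/2}(\length(x_1\circ\gamma)+\length(x_2\circ\gamma))$ from bounded variation, the geometric-series bound $|\scrB F(w)|\leq e^{\rho/2}(\length(x_1\circ\gamma)+\length(x_2\circ\gamma))/(|w|-\rho)$, and the nearest-integer conclusion. Nothing differs in substance from the paper's argument.
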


\begin{bibdiv}
\begin{biblist}
\bib{MR0007431}{article}{
   author={Boas, R. P., Jr.},
   title={Entire functions of exponential type},
   journal={Bull. Amer. Math. Soc.},
   volume={48},
   date={1942},
   pages={839--849},
   issn={0002-9904},
   review={\MR{0007431 (4,136c)}},
}
\bib{MR0162914}{book}{
   author={Boas, Ralph P., Jr.},
   author={Buck, R. Creighton},
   title={Polynomial expansions of analytic functions},
   series={Second printing, corrected. Ergebnisse der Mathematik und ihrer
   Grenzgebiete, N.F., Bd. 19},
   publisher={Academic Press Inc.},
   place={Publishers, New York},
   date={1964},
   pages={viii+77},
   review={\MR{0162914 (29 \#218)}},
}
\bib{MR3237773}{article}{
   author={Boedihardjo, Horatio},
   author={Ni, Hao},
   author={Qian, Zhongmin},
   title={Uniqueness of signature for simple curves},
   journal={J. Funct. Anal.},
   volume={267},
   date={2014},
   number={6},
   pages={1778--1806},
   issn={0022-1236},
   review={\MR{3237773}},
   doi={10.1016/j.jfa.2014.06.006},
}
\bib{MR0022601}{article}{
   author={Buck, R. Creighton},
   title={Interpolation and uniqueness of entire functions},
   journal={Proc. Nat. Acad. Sci. U. S. A.},
   volume={33},
   date={1947},
   pages={288--292},
   issn={0027-8424},
   review={\MR{0022601 (9,232g)}},
}
\bib{MR0029985}{article}{
   author={Buck, R. Creighton},
   title={Interpolation series},
   journal={Trans. Amer. Math. Soc.},
   volume={64},
   date={1948},
   pages={283--298},
   issn={0002-9947},
   review={\MR{0029985 (10,693d)}},
}
\bib{MR0073174}{article}{
   author={Chen, Kuo-Tsai},
   title={Iterated integrals and exponential homomorphisms},
   journal={Proc. London Math. Soc. (3)},
   volume={4},
   date={1954},
   pages={502--512},
   issn={0024-6115},
   review={\MR{0073174 (17,394g)}},
}
\bib{MR0106258}{article}{
   author={Chen, Kuo-Tsai},
   title={Integration of paths---a faithful representation of paths by
   non-commutative formal power series},
   journal={Trans. Amer. Math. Soc.},
   volume={89},
   date={1958},
   pages={395--407},
   issn={0002-9947},
   review={\MR{0106258 (21 \#4992)}},
}
\bib{MR2604669}{book}{
   author={Friz, Peter K.},
   author={Victoir, Nicolas B.},
   title={Multidimensional stochastic processes as rough paths},
   series={Cambridge Studies in Advanced Mathematics},
   volume={120},
   note={Theory and applications},
   publisher={Cambridge University Press},
   place={Cambridge},
   date={2010},
   pages={xiv+656},
   isbn={978-0-521-87607-0},
   review={\MR{2604669 (2012e:60001)}},
}
\bib{MR2630037}{article}{
   author={Hambly, Ben},
   author={Lyons, Terry},
   title={Uniqueness for the signature of a path of bounded variation and
   the reduced path group},
   journal={Ann. of Math. (2)},
   volume={171},
   date={2010},
   number={1},
   pages={109--167},
   issn={0003-486X},
   review={\MR{2630037 (2011i:58059)}},
   doi={10.4007/annals.2010.171.109},
}
\bib{YAM}{book}{
   author={Yam, Sheung Chi Phillip},
   title={Analytical and Topological Aspects of Signatures},
   note={Thesis (D.Phil.)--Wolfson College, Oxford},
   date={2008},
   pages={213},
}
\bib{MR1555421}{article}{
   author={Young, L. C.},
   title={An inequality of the H\"older type, connected with Stieltjes
   integration},
   journal={Acta Math.},
   volume={67},
   date={1936},
   number={1},
   pages={251--282},
   issn={0001-5962},
   review={\MR{1555421}},
   doi={10.1007/BF02401743},
}

\end{biblist}
\end{bibdiv}

\end{document}